\newtheorem{theorem}{Theorem}[section]
\newtheorem{lemma}[theorem]{Lemma}
\newtheorem{cor}[theorem]{Corollary}
\theoremstyle{definition}
\newtheorem{definition}[theorem]{Definition}
\newtheorem{example}[theorem]{Example}
\theoremstyle{remark}
\newtheorem{remark}[theorem]{Remark}
\numberwithin{equation}{section}
\begin{document}


\title{Sharp Bounds for Generalized Zagreb Indices of Graphs}
\author{Sanju Vaidya}
\address{Math \& CS Department, Mercy University, 555 Broadway, Dobbs Ferry, NY 10522}
\curraddr{}
\email{SVaidya@mercy.edu}
\thanks{}

\author{Jeff Chang}
\address{Math \& CS Department, Mercy University, 555 Broadway, Dobbs Ferry, NY 10522}
\curraddr{}
\email{cchang4@mercy.edu}
\thanks{}

\subjclass[2010]{Primary }

\keywords{Molecular graphs, topological indices, Zagreb index, leap Zagreb index}

\begin{abstract}

In the last forty years, many scientists used graph theory to develop mathematical models for analyzing structures and properties of various chemical compounds.  In this paper, we will establish formulas and bounds for generalized first Zagreb Index and coindex, which are based on degrees of vertices.  In addition, for triangle and quadrangle free graphs, we will establish formulas and bounds for generalized first leap Zagreb Index and coindex, which are based on 2-distance degrees of vertices. Additionally, we will establish sharp bounds of generalized first Zagreb index and the leap index for various types of graphs and provide examples for which the sharp bounds are attained. In addition, we will find regression models and compare the first Zagreb index and the first leap Zagreb index for predicting some physicochemical properties of certain chemical compounds, benzenoid hydrocarbons.

\end{abstract}	

\maketitle

\section{Introduction}

In mathematical chemistry, an emerging interdisciplinary research area, chemical graph theory brings together mathematicians, computer scientists, and chemists to analyze complicated problems such as design of new drugs and development of new agrochemicals. In the last forty years, many scientists have developed mathematical models for analyzing structures and properties of various chemical compounds. Graph theory plays a very important role in developing many types of models such as Quantitative Structure-Property Relationships (QSPR) models, Quantitative Structure-Activity Relationships (QSAR) models, and Quantitative Structure-Toxicity Relationships (QSTR) models (\cite{BGG}, \cite{DB}, \cite{GFS}, \cite{SPNA}, \cite{TC}).

In molecular graphs of chemical compounds vertices correspond to atoms and edges correspond to the bonds between them. A topological index (connectivity index) is a type of a molecular descriptor that is based on the molecular graph of a chemical compound. In 1947 Harry Weiner introduced a topological index related to molecular branching. He correlated the indices with the boiling points of certain chemical compounds, alkanes. This inspired many mathematicians and chemists to develop more topological indices for molecular graphs.

In 1972, Zagreb indices were introduced by Gutman and Trijanastic \cite{GT}. They are based on the degrees of the vertices. They are very useful in modeling chemical and biological properties of chemical compounds, as shown by Devillers et al \cite{DB}, Basak et al \cite{BGG}, and Todeschin et al \cite{TC}. Gutman et al \cite{GFMG} and Zhou et al \cite{B} used Zagreb indices for analyzing bounds for Estrada index, which is based on the eigenvalues of the adjacency matrix. Zagreb indices were generalized by many mathematicians and scientists. The first Zagreb index of a graph is the sum of the squares of the degrees of the vertices.  Li and Zhao \cite{LZ}, Li and Zheng \cite{LZ2} introduced the general first Zagreb index (zeroth-order general Randic index), which is the sum of any powers of the degrees of the vertices. In 2015, Furtula and Gutman \cite{FG} named the sum of the cubes of the degrees of the vertices as the forgotten topological index or F-index and investigated its properties and applications to Chemistry. Moreover, in 2017, Abdo, Dimitrov, and Gutman \cite{ADG} determined trees with maximal F-index. In 2008, Doslic \cite{D} introduced the first Zagreb coindex, which is based on the degrees of nonadjacent vertices. Additionally, in 2012 Mansour-Song \cite{MS} introduced the general first Zagreb coindex and proved relation between the general first Zagreb index and coindex. Moreover, many mathematicians (such as Gao-Farahani-Li \cite{GFS} and Solemani-Nikmehr-Tavvallaee \cite{SNT}) calculated Zagreb index and coindex for various nanostructures, which are very useful in designing new medicines. 

In 2017 Naji et al \cite{NNI} introduced the  first leap Zagreb index, which is based on 2-distance degree of vertices. In 2019 Kulli V. R.  \cite{K} introduced the general first leap Zagreb index, which is also based on 2-distance degree of vertices. In 2022, Ferdose and Shivashankara \cite{FK} introduced the  first leap Zagreb coindex. In 2022, P.Sarkar, Nilanjan De, and Anita Pal \cite{SPNA} investigated predictive ability of physicochemical prperties of polycyclic aromatic hydrocarbons for cerain degree based topological indices including Zagreb indices. In addition, in 2023, K.S. Pise and H.S. Ramane \cite{PKH} showed chemical importance of the leap Zagreb indices.

The main research question is how to compute bounds and formulas of the indices stated above and find graphs which attain the bounds. Gutman et al \cite{GFMG} established a formula of the first Zagreb index for alkanes (chemical trees). Vaidya-Surendran \cite{VS} extended this to cycloalkanes, alkenes, and alkynes. We established formulas of the first Zagreb index for certain graphs and we proved that the molecular graphs of cycloalkanes, alkenes, and alkynes attain the greatest values. Additionally, Hu-Li-Shi-Xu-Gutman \cite{HLSXG} investigated the zeroth-order general Randic index (which is the same as general Zagreb index) for molecular graphs in which the maximum vertex degree is at most $4$. In this paper we will extend this further to the general first Zagreb index and the leap index for graphs with maximum vertex degree more than $4$.

In Section 2 of this paper, we will review definitions of Zagreb indices and coindices. In Section 3 we will establish formulas of the general first Zagreb index and coindex for graphs. Additionally, we will find upper and lower bounds for the general first Zagreb index and determine graphs which attain the bounds. Moreover, we will establish sharp upper and lower bounds for the general first Zagreb index for some special graphs. In Section 4,  for triangle and quadrangle free graphs, we will establish formulas and bounds for generalized first leap Zagreb Index and coindex, which are based on 2-distance degrees of vertices. In Section 5, we will find regression models for properties like boiling point and entropy of Benzenoid hydrocarbons using these indices. In Section 6 we will have discussion and conclusion. The area of Chemical Graph Theory is fascinating. Various topological indices play a crucial role in modeling many properties of chemical compounds. This process of mathematical modeling is very useful in real life because it reduces cost and saves time in many fields such as drug discovery and assessment of chemicals.

\section{Review of Zagreb indices and coindices}

In this section we will review some results about Zagreb indices and their generalizations. We will use the notation and terminology introduced in Gutman et al \cite{GFMG}. In {\bf molecular graphs} of chemical compounds, vertices correspond to individual atoms and edges correspond to the bonds between them. A {\bf topological index} (connectivity index) is a type of a molecular descriptor that is based on the molecular graph of a chemical compound. The definition of the first Zagreb index, which is introduced by Gutman and Trijanastic \cite{GT}, is as follows.
\begin{definition}
    Let $G$ be a graph with vertex set $V(G)$ and $d(u)$ be the degree of the vertex $u\in V(G)$. Then the first Zagreb index is defined as follows:
    \[Z_G=M_1(G)=\sum_{u\in V(G)}d(u)^2.\]
\end{definition}

In 2007 Gutman-Fortula-Markovic-Gilistic \cite{GFMG} proved that for a chemical tree (alkane) with $n$ vertices $Z_G=M_1(G)=6n-10-2n_2-2n_3$, where $n_i$ is the number of vertices of degree $i$ for i = 2, 3. In Vaidya and Surendran \cite{VS}, we extended this and established formulas of the first Zagreb index for molecular graphs of cycloalkanes, alkenes, and alkynes. 

Zagreb indices have been generalized and studied for more than 30 years. The following general first Zagreb index was first studied by Li and Zhao \cite{LZ}, Li and Zheng \cite{LZ2}, and later on by many mathematicians such as Liu-Liu \cite{LL}.

\begin{definition}
    Let $G$ be a graph with vertex set $V(G)$ and $d(u)$ be the degree of the vertex $u\in V(G)$. Let $\alpha$ be a nonzero real number such that $\alpha\neq 1$. Then the general first Zagreb index (or the zeroth-order general Randic index) is defined as follows:
    \[M_1^\alpha (G)=\sum_{u\in V(G)}d(u)^\alpha.\]
\end{definition}

In 2008, Doslic \cite{D} introduced the following first Zagreb coindex.

\begin{definition}
    Let $G$ be a graph with vertex set $V(G)$, edge set $E(G),$ and $d(u)$ be the degree of the vertex $u\in V(G)$. Then the first Zagreb coindex is defined as follows:
    \[M_1^-(G)=\sum_{\substack{uv\not\in E(G)\\u\neq v}}[d(u)+d(v)].\]
\end{definition}

In 2012 Mansour-Song \cite{MS} introduced the following general first Zagreb coindex.

\begin{definition}
    Let $G$ be a graph with vertex set $V(G)$, edge set $E(G)$, and $d(u)$ be the degree of the vertex $u\in V(G)$. Let $\alpha$ be an integer such that $\alpha\geq 1$. Then the general first Zagreb coindex is defined as follows:
    \[M_1^{-\alpha}(G)=\sum_{\substack{uv\not\in E(G)\\u\neq v}}[d(u)^\alpha+d(v)^\alpha].
    \]
\end{definition}

In 2017 Naji et al \cite{NNI} introduced the following first leap Zagreb index, which is based on 2-distance degrees of vertices.

\begin{definition}
    Let $G$ be a graph with vertex set $V(G)$ and $d_2(u)$ be the 2-distance degree of the vertex $u\in V(G)$, which is the number of vertices which are at distance 2 from the vertex $u$. Then the first leap Zagreb index is defined as follows:
    \[LM_1(G)=\sum_{u\in V(G)}d_2(u)^2.\]
\end{definition}

In 2019 Kulli V. R.  \cite{K} introduced the following general first leap Zagreb index, which is based on 2-distance degrees of vertices.

\begin{definition}
    Let $G$ be a graph with vertex set $V(G)$ and $d_2(u)$ be the 2-distance degree of the vertex $u\in V(G)$. Let $\alpha$ be a nonzero real number such that $\alpha\neq 1$.Then the general first leap Zagreb index is defined as follows:
    \[LM_1^\alpha (G)=\sum_{u\in V(G)}d_2(u)^\alpha.\]
\end{definition}

In 2022 Ferdose and Shivashankara \cite{FK} introduced the following first leap Zagreb coindex.

\begin{definition}
    Let $G$ be a graph with vertex set $V(G)$, edge set $E(G),$ and $d_2(u)$ be the 2-distance degree of the vertex $u\in V(G)$. Let $E_2$ be the edge set of $G^2$, where $G^2$ is the graph with $V(G^2) = V(G)$ and edge $uv \in E_2 = E(G^2)$ if and only if  distance $d(u, v) = 2$ in the graph $G$. Then the first leap Zagreb coindex is defined as follows:
    \[LM_1^-(G)=\sum_{\substack{uv\not\in E_2(G)\\u\neq v}}[d_2(u)+d_2(v)].\]
\end{definition}

\section{Formulas and bounds for generalized Zagreb indices}

In this section we will establish some formulas for the general first Zagreb index and coindex. We will then find sharp upper and lower bounds for the general first Zagreb index using these formulas. First, we need the following Lemma.

\begin{lemma}\label{Lemma}
Let $p$ and $q$ be any positive integers such that $p < q$. Let $\alpha\in\mathbb{R}$ and $\alpha\not\in\{0,1\}$. Then we have the following.
\begin{enumerate}
\item If  the real number $\alpha<0$ or $\alpha>1$, then  $(p + i)^\alpha - p^\alpha - i(\frac{q^\alpha - p^\alpha}{q - p}) \leq 0$ for $1 \leq i \leq q - p - 1$  and if $0 < \alpha < 1$, then $(p + i)^\alpha - p^\alpha - i(\frac{q^\alpha - p^\alpha}{q - p}) \geq 0$ for $1 \leq i \leq q - p - 1$. 

\item If  the real number $\alpha<0$ or $\alpha>1$, then $(p + i)^\alpha - p^\alpha  - i((p + 1)^\alpha - p^\alpha) \geq 0$ for $2 \leq i \leq q - p$ and for $0 < \alpha < 1$, $(p + i)^\alpha - p^\alpha  - i((p + 1)^\alpha - p^\alpha) \leq 0$ for $2 \leq i \leq q - p$.  

\end{enumerate}
\end{lemma}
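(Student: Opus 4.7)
The plan is to recognize both statements as standard convexity/concavity inequalities for the smooth power function $\phi(x)=x^\alpha$ on $(0,\infty)$. Its second derivative $\phi''(x)=\alpha(\alpha-1)\,x^{\alpha-2}$ has the sign of $\alpha(\alpha-1)$, so $\phi$ is strictly convex precisely when $\alpha<0$ or $\alpha>1$, and strictly concave precisely when $0<\alpha<1$. Everything then reduces to two classical consequences of convexity: the chord-versus-curve inequality (for part 1) and the monotonicity of consecutive forward differences (for part 2).

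For part (1), I would parametrize the integer $p+i$ with $1\leq i\leq q-p-1$ as a convex combination of the endpoints, writing $p+i=(1-t)p+tq$ with $t=i/(q-p)\in(0,1)$. The value of the affine chord through $(p,p^\alpha)$ and $(q,q^\alpha)$ at $x=p+i$ is then exactly $(1-t)p^\alpha+t q^\alpha=p^\alpha+i\cdot\frac{q^\alpha-p^\alpha}{q-p}$, so the quantity in the lemma is $\phi(p+i)$ minus the chord value. Convexity of $\phi$ (for $\alpha<0$ or $\alpha>1$) forces this difference to be $\leq0$, while concavity (for $0<\alpha<1$) forces it to be $\geq0$, which is exactly what is claimed.

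For part (2), I would telescope and write
\[
(p+i)^\alpha - p^\alpha \;=\;\sum_{k=0}^{i-1}\bigl[(p+k+1)^\alpha-(p+k)^\alpha\bigr].
\]
A standard consequence of convexity is that the unit forward difference $\Delta\phi(x):=\phi(x+1)-\phi(x)$ is nondecreasing in $x$ when $\phi$ is convex, and nonincreasing when $\phi$ is concave (this follows, for instance, from the mean value theorem: $\Delta\phi(x)=\phi'(\xi_x)$ for some $\xi_x\in(x,x+1)$, and $\phi'$ is monotone according to the sign of $\phi''$). Thus for convex $\phi$ each summand is at least $\Delta\phi(p)=(p+1)^\alpha-p^\alpha$, and summing gives $(p+i)^\alpha-p^\alpha\geq i\bigl((p+1)^\alpha-p^\alpha\bigr)$, i.e.\ the claimed $\geq 0$. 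For concave $\phi$ every summand is at most $\Delta\phi(p)$, yielding the reverse inequality.

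There is no serious obstacle here; the proof is essentially an unpacking of convexity. The only mild care needed is keeping track of the range of $i$ (in part 1 we need $1\leq i\leq q-p-1$ so that $p+i$ is strictly between $p$ and $q$, while in part 2 the telescoping naturally works for any $2\leq i\leq q-p$) and remembering to reverse the inequality in the concave regime $0<\alpha<1$. Since the hypotheses guarantee $\alpha(\alpha-1)\neq0$, strict convexity/concavity actually holds, but the statement is phrased with non-strict inequalities and that is what will drop out of the argument.
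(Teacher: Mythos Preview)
Your proof is correct. Both parts are indeed direct consequences of the convexity/concavity of $\phi(x)=x^\alpha$, and you have unpacked this cleanly: part~(1) is the chord-below-graph inequality, and part~(2) follows from the monotonicity of unit forward differences via telescoping.

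The paper's argument is different in presentation, though ultimately equivalent in spirit. Rather than invoking convexity directly, it considers the auxiliary function $f(x)=\dfrac{x^\alpha-1}{x-1}$ on $(1,\infty)$ and uses that $f$ is increasing when $\alpha<0$ or $\alpha>1$ and decreasing when $0<\alpha<1$. Applying this monotonicity to the pair $\tfrac{p+i}{p}$ and $\tfrac{q}{p}$ and simplifying yields part~(1); applying it to $\tfrac{p+i}{p}$ and $\tfrac{p+1}{p}$ yields part~(2). This is really a repackaging of convexity (the quantity $f(x/p)$ is, up to a positive factor, the secant slope of $\phi$ between $p$ and $x$), so the two proofs rest on the same underlying fact. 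The paper's route has the virtue of handling both parts with a single monotonicity statement; yours is perhaps more transparent about why the inequalities hold, and your telescoping argument for part~(2) makes the role of the integer step explicit.
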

\begin{proof}
We note that the function $f (x) = \frac {x^\alpha - 1}{x - 1}$, where $x > 1$, is increasing if $\alpha > 1$ or $\alpha < 0$ and is decreasing if $0 < \alpha < 1$. So it follows that if  the real number $\alpha<0$ or $\alpha>1$ and $1 \leq i \leq q - p - 1$,then $f (\frac{p + i}{p}) \leq f(\frac{q}{p})$. By simplifying, we get $(p + i)^\alpha - p^\alpha - i(\frac{q^\alpha - p^\alpha}{q - p}) \leq 0$. In addition, if $0 < \alpha < 1$ and $1 \leq i \leq q - p - 1$, then $f (\frac{p + i}{p}) \geq f(\frac{q}{p})$. By simplifying, we get $(p + i)^\alpha - p^\alpha - i(\frac{q^\alpha - p^\alpha}{q - p}) \geq 0$. The proof is similar for part (2).    
\end{proof}

 In \cite{GD}, Gutman and Das proved the formulas for the first Zagreb index when $\alpha=2$. By generalizing their ideas, now we will establish some formulas for the general first Zagreb index and coindex in the following Theorem.

\begin{theorem}\label{UBFormulas}
    Let $G$ be a graph with $n$ vertices and $m$ edges where $n\geq 3$. Let $n_i$ denote the number of vertices of degree $i$. Let $\delta$ and  $\Delta$ respectively denote the minimum and the maximum degree.  Assume that $\delta \not= \Delta$. Let $\alpha\in\mathbb{R}$ and $\alpha\not\in\{0,1\}$.  Further denote $s_\alpha=\frac{\Delta^\alpha- \delta^\alpha}{\Delta- \delta}$. Then we have the following.
    \begin{enumerate}
        \item The general first Zagreb index
        \[M_1^{\alpha}(G)=n\delta^\alpha +  (2m - n\delta)s_\alpha +\sum_{i =1}^{\Delta - \delta - 1}n_{\delta + i} \left[(\delta + i)^\alpha - \delta^\alpha  - is_\alpha\right].\]
Additionally, if  $\alpha<0$ or $\alpha>1$ the coefficients  $(\delta + i)^\alpha - \delta^\alpha - is_\alpha \leq 0$ for $1 \leq i \leq \Delta -\delta - 1$  and for $0 < \alpha < 1$, the coefficients $(\delta + i)^\alpha - \delta^\alpha - is_\alpha \geq 0$ for $1 \leq i \leq \Delta -\delta - 1$.

        \item The general first Zagreb index
       \begin{align*}
           M_1^{\alpha}(G)=n\delta^\alpha +  (2m - n\delta) (&(\delta + 1)^\alpha - \delta^\alpha)  +\\&\sum_{i = 2}^{\Delta - \delta }n_{\delta + i} \left[(\delta + i)^\alpha - \delta^\alpha  - i((\delta + 1)^\alpha - \delta^\alpha)\right].
       \end{align*}
 Additionally, if  $\alpha<0$ or $\alpha>1$, the coefficients $(\delta + i)^\alpha - \delta^\alpha  - i((\delta + 1)^\alpha - \delta^\alpha) \geq 0$ for $2 \leq i \leq \Delta -\delta$ and for $0 < \alpha < 1$, the coefficients $(\delta + i)^\alpha - \delta^\alpha  - i((\delta + 1)^\alpha - \delta^\alpha) \leq 0$ for $2 \leq i \leq \Delta -\delta $.
 
\item If $\alpha > 1$, then the general first Zagreb coindex
        \begin{align*}
            M_1^{-\alpha+1}&(G) = n(n-1)\delta^\alpha + (2m-n\delta)s_\alpha(n-1)\\
            &+(n-1)\sum_{i=1}^{\Delta - \delta - 1}n_{\delta + i}[(\delta + i)^\alpha - \delta^\alpha  - is_\alpha] - n\delta^{\alpha + 1} - (2m - n\delta)s_{\alpha+1}\\
            &-\sum_{i=1}^{\Delta - \delta -1}n_{\delta + i}[(\delta + i)^{\alpha + 1} - \delta^{\alpha + 1}  - is_{\alpha + 1}].
        \end{align*}

        \item If $\alpha > 1$, then the general first Zagreb coindex
        \begin{align*}
        M_1^{-\alpha+1}&(G) = (n - 1) n\delta^\alpha +  (n - 1)(2m - n\delta) ((\delta + 1)^\alpha - \delta^\alpha) \\
        & + (n - 1) \sum_{i = 2}^{\Delta - \delta }n_{\delta + i} [(\delta + i)^\alpha - \delta^\alpha  - i((\delta + 1)^\alpha - \delta^\alpha)-n\delta^{\alpha + 1})\\ 
         &- (2m - n\delta) ((\delta + 1)^{\alpha + 1} - \delta^{\alpha + 1})\\
         &-\sum_{i = 2}^{\Delta - \delta }n_{\delta + i} \left[(\delta + i)^{\alpha + 1} - \delta^{\alpha + 1}  - i((\delta + 1)^{\alpha + 1} - \delta^{\alpha + 1})\right].
          \end{align*}
    \end{enumerate}
\end{theorem}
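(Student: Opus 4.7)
The plan is to reduce everything to elementary manipulations using the two handshaking identities $\sum_{i=\delta}^{\Delta} n_i = n$ and $\sum_{i=\delta}^{\Delta} i\,n_i = 2m$, after which Lemma~\ref{Lemma} takes care of the inequality assertions on the coefficients. Parts (1) and (2) are essentially algebraic rearrangements of the defining sum for $M_1^\alpha(G)$; parts (3) and (4) then follow by invoking the index--coindex identity and substituting the results of (1) and (2).

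For part (1), I would begin with
\[
M_1^\alpha(G) = \sum_{i=0}^{\Delta-\delta} n_{\delta+i}\,(\delta+i)^\alpha,
\]
write $(\delta+i)^\alpha = \delta^\alpha + \bigl[(\delta+i)^\alpha - \delta^\alpha\bigr]$, and use $\sum n_{\delta+i} = n$ to peel off the leading $n\delta^\alpha$. To convert the remainder to the stated form I would add and subtract $i\,s_\alpha$ inside each summand: the identity $s_\alpha(\Delta - \delta) = \Delta^\alpha - \delta^\alpha$ makes the $i=\Delta-\delta$ summand collapse, and $\sum_{i=1}^{\Delta-\delta} i\,n_{\delta+i} = 2m - n\delta$ collects the $s_\alpha$ pieces into the $(2m - n\delta)s_\alpha$ term, leaving exactly the sum over $1 \leq i \leq \Delta-\delta-1$ displayed in the statement. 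Part (2) is obtained by the same rearrangement with the slope $(\delta+1)^\alpha - \delta^\alpha$ substituted for $s_\alpha$; this time it is the $i=1$ summand (rather than the $i=\Delta-\delta$ one) that collapses, so the residual sum begins at $i=2$. The inequality claims on the coefficients are then direct applications of Lemma~\ref{Lemma}(1) and (2), respectively, with $p = \delta$ and $q = \Delta$.

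For parts (3) and (4), I would first establish (or quote from \cite{MS}) the identity
\[
M_1^{-\alpha}(G) = (n-1)\,M_1^\alpha(G) - M_1^{\alpha+1}(G),
\]
whose proof is a short double count: summing $d(u)^\alpha + d(v)^\alpha$ over every unordered pair contributes $d(u)^\alpha$ exactly $n-1$ times for each $u$, giving $(n-1)M_1^\alpha(G)$, while the same sum restricted to edges is $\sum_u d(u)\cdot d(u)^\alpha = M_1^{\alpha+1}(G)$. The stated formula in part (3) is then obtained by substituting the expression from part (1) for $M_1^\alpha(G)$ and, with $\alpha$ replaced by $\alpha+1$ (so $s_\alpha$ becomes $s_{\alpha+1}$), for $M_1^{\alpha+1}(G)$; part (4) is obtained identically from part (2). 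The hypothesis $\alpha > 1$ ensures $\alpha+1$ lies in the range where parts (1) and (2) apply, so no extra argument is needed there.

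The main obstacle is purely bookkeeping: keeping track of which boundary summand ($i=1$ versus $i=\Delta-\delta$) is the one absorbed into the linear-in-$(2m-n\delta)$ term, and making sure the factor $n-1$ in the coindex identity multiplies only the piece coming from $M_1^\alpha$ and not the piece coming from $M_1^{\alpha+1}$. No new inequalities are needed for parts (3) and (4); the sign statements in (1) and (2) propagate automatically when one substitutes.
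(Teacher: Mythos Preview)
Your proposal is correct and follows essentially the same route as the paper: both arguments use the two identities $\sum n_i = n$ and $\sum i\,n_i = 2m$ to eliminate the contributions of $n_\delta$ and $n_\Delta$ (respectively $n_\delta$ and $n_{\delta+1}$) from $M_1^\alpha(G)=\sum i^\alpha n_i$, invoke Lemma~\ref{Lemma} for the sign of the remaining coefficients, and then derive (3)--(4) by substituting (1)--(2) into the Mansour--Song identity $M_1^{-\alpha}(G)=(n-1)M_1^\alpha(G)-M_1^{\alpha+1}(G)$. The only cosmetic difference is that the paper phrases the elimination as ``solve for $n_\Delta$ (resp.\ $n_\delta,n_{\delta+1}$) and substitute,'' whereas you phrase it as ``add and subtract $is_\alpha$ (resp.\ $i((\delta+1)^\alpha-\delta^\alpha)$) and watch the boundary summand collapse''; these are the same computation.
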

\begin{proof}
We have $n=\sum_{i= \delta}^\Delta n_i$ and $\sum_{i= \delta}^\Delta in_i= 2m$. Solving these equations, we will rewrite $n_\Delta$ as follows.
\[
n_\Delta=\frac{2m - n\delta - \sum_{i=\delta + 1}^{\Delta  - 1}((i - d)n_i}{\Delta - \delta}.
\]

Write $M_1^\alpha (G)$ in terms of $n_i$, $\delta\leq i\leq \Delta$, and substitute $n_\delta$ with $n-\sum_{i=\delta + 1}^\Delta n_i$, we obtain
\[
M_1^\alpha (G)=\sum_{i=\delta}^\Delta  i^\alpha n_i = \delta^\alpha( n - \sum_{i= \delta + 1}^\Delta n_i) + \sum_{i=\delta + 1}^\Delta  i^\alpha n_i = n\delta^\alpha + \sum_{i = \delta + 1}^\Delta  n_i(i^\alpha - \delta^\alpha).
\]
Further substitute $n_\Delta$ into the sum and using Lemma \ref{Lemma}, we get the result in part (1). Using the identities $n_\delta+n_{\delta + 1} = n - \sum_{i = \delta + 2}^\Delta n_i$ and $\delta n_\delta + (\delta + 1)n_{\delta + 1} = 2m - \sum_{i = \delta + 2}^\Delta  in_i$, we solve for $n_\delta$ and $n_{\delta + 1}$. Substituting $n_\delta$ and $n_{\delta + 1}$ into $M_1^\alpha(G)=\sum_{i=\delta}^\Delta  i^\alpha n_i$, and using Lemma \ref{Lemma}, we get the result in part (2). To prove parts (3) and (4), we note that Mansour T and Song C. \cite{MS} proved the following relation between the general first Zagreb index and coindex for any graph $G$,
\begin{equation}\label{eq}
    M_1^{-\alpha+1}=(n-1)M_1^\alpha(G)-M_1^{\alpha+1}(G).
\end{equation}
Then using the formula in part (1), we get the result in part (3) and using part (2), we get the result in part (4).

\end{proof}

Solemani-Nikmehr-Tavvallaee \cite{SNT} calculated  the first Zagreb index and coindex for a nanostructure, linear $[n]$ Tetracene, using the first Zagreb polynomial and partition of edges. In the following corollary, we will get the same result by using Theorem \ref{UBFormulas}.

\begin{cor}[Solemani-Nikmehr-Tavvallaee]
    The first Zagreb index and coindex for a linear $[n]$ Tetracene, $G$, are as follows.
    \[
    M_1(G)=122n-20,
    \]
    \[
M_1^-(G)=828n^2-240n+24.
    \]
\end{cor}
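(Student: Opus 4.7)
My plan is to apply Theorem \ref{UBFormulas} directly, once I extract the structural parameters of linear $[n]$ tetracene. Writing $|V(G)|$ and $|E(G)|$ for the vertex and edge counts (to avoid conflict with the parameter $n$), I observe that every vertex of this benzenoid has degree $2$ (periphery) or $3$ (ring junction), so $\delta = 2$ and $\Delta = 3$. By counting carbons and bonds in the skeleton I would establish $|V(G)| = 18n$ and $|E(G)| = 23n - 2$; combined with $n_2 + n_3 = |V(G)|$ and the handshake identity $2n_2 + 3n_3 = 2|E(G)|$, this forces $n_2 = 8n + 4$ and $n_3 = 10n - 4$.

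With these parameters in hand, Theorem \ref{UBFormulas}(1) at $\alpha = 2$ collapses to a one-line computation: $\Delta - \delta - 1 = 0$ renders the sum over $i$ empty, and $s_2 = (3^2 - 2^2)/(3 - 2) = 5$, so
\[
M_1(G) = 4|V(G)| + 5\bigl(2|E(G)| - 2|V(G)|\bigr) = 10|E(G)| - 6|V(G)|.
\]
Substituting gives $M_1(G) = 10(23n - 2) - 6(18n) = 122n - 20$, as claimed.

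For the coindex I would invoke the Mansour--Song identity \eqref{eq} in the classical $\alpha = 1$ case, namely $M_1^-(G) = 2|E(G)|(|V(G)| - 1) - M_1(G)$. Expanding the product $2(23n - 2)(18n - 1) = 828n^2 - 118n + 4$ and subtracting the previously computed $M_1(G) = 122n - 20$ yields $M_1^-(G) = 828n^2 - 240n + 24$.

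The only substantive step is the combinatorial bookkeeping that produces $|V(G)| = 18n$ and $|E(G)| = 23n - 2$ for this particular nanostructure; everything downstream is mechanical substitution into Theorem \ref{UBFormulas} and the identity \eqref{eq}. As a sanity check, at $n = 1$ the graph reduces to ordinary tetracene with $18$ vertices, $21$ edges, and degree sequence $(n_2, n_3) = (12, 6)$, giving $M_1 = 4 \cdot 12 + 9 \cdot 6 = 102 = 122(1) - 20$, consistent with the general formula.
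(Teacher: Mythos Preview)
Your proposal is correct and follows essentially the same approach as the paper: extract the structural parameters $|V(G)|=18n$, $|E(G)|=23n-2$, $\delta=2$, $\Delta=3$, $n_2=8n+4$, $n_3=10n-4$, and then feed them into Theorem~\ref{UBFormulas}. The only cosmetic difference is that for the coindex you invoke the Mansour--Song identity $M_1^-(G)=2|E(G)|(|V(G)|-1)-M_1(G)$ directly, whereas the paper appeals to Theorem~\ref{UBFormulas} (whose coindex parts are themselves derived from that identity); the computations are equivalent.
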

\begin{proof}
    We note that for a linear $[n]$ Tetracene, the number of vertices are $18n$ and the number of edges are $23n - 2$. The maximum degree is $3$, the number of vertices of degree $3$ are $10n - 4$, and the number of vertices of degree $2$ are $8n + 4$. Consequently, the results follow from the Theorem \ref{UBFormulas}.
\end{proof} 

The following corollary gives upper bounds or lower bounds for the general first Zagreb index, depending on the values of $\alpha$. Both the upper bounds and lower bounds are sharp. 

\begin{cor}\label{Bounds_Cor}
    Let $G$ be a graph with $n$ vertices and $m$ edges where $n\geq 3$. Let $n_i$ denote the number of vertices of degree $i$. Let $\delta$ and  $\Delta$ respectively denote the minimum and the maximum degree.  Assume that $\delta \not= \Delta$. Let $\alpha\in\mathbb{R}$ and $\alpha\not\in\{0,1\}$.  Further denote $s_\alpha=\frac{\Delta^\alpha- \delta^\alpha}{\Delta- \delta}$. Then we have the following.
\begin{enumerate}
        \item If $\alpha<0$ or $\alpha>1$, the upper bound for the general first Zagreb index is $M_1^\alpha (G)\leq n\delta^\alpha +  (2m - n\delta)s_\alpha$. 
    
    \item If $0<\alpha<1$, it becomes the lower bound for the general first Zagreb index, that is, $M_1^\alpha (G)\geq n\delta^\alpha +  (2m - n\delta)s_\alpha$. 
\end{enumerate}
Moreover, for parts (1) and (2), the equality occurs for any bi-degreed graph with vertices of degree $\delta$ and $\Delta.$

\begin{enumerate}
\item[(3)] If $\alpha<0$ or $\alpha>1$, then the lower bound for the general first Zagreb index is $M_1^\alpha(G)\geq n\delta^\alpha +  (2m - n\delta) ((\delta + 1)^\alpha - \delta^\alpha) $, and the equality occurs for any graph with $\Delta=2$. Additionally, when $\Delta\geq 3$,
\begin{align*}
    M_1^\alpha(G)\geq n\delta^\alpha +  (2m - n\delta) (&(\delta + 1)^\alpha - \delta^\alpha) +
    \\&\Delta^\alpha - \delta^\alpha - (\Delta -\delta)((\delta + 1)^\alpha - \delta^\alpha) 
\end{align*}
    and the equality occurs if the graph G has a unique vertex of degree $\Delta$ and $n - 1$ vertices of degrees $\delta$ or $\delta + 1$ .
    
   \item[(4)] If $0<\alpha<1$, $n\delta^\alpha +  (2m - n\delta) ((\delta + 1)^\alpha - \delta^\alpha) $ becomes the upper bound for the general first Zagreb index, that is, $M_1^\alpha(G)\leq n\delta^\alpha +  (2m - n\delta) ((\delta + 1)^\alpha - \delta^\alpha) $. The equality occurs for any graph with $\Delta=2$. Additionally, when $\Delta\geq 3$, $M_1^\alpha(G)\leq n\delta^\alpha +  (2m - n\delta) ((\delta + 1)^\alpha - \delta^\alpha) +\Delta^\alpha - \delta^\alpha - (\Delta -\delta)((\delta + 1)^\alpha - \delta^\alpha)$, where the equality occurs if the graph G has a unique vertex of degree $\Delta$ and $n - 1$ vertices of degrees $\delta$ or $\delta + 1$ .
\end{enumerate}
       
\end{cor}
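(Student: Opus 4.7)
The plan is to deduce all four bounds directly from the two exact formulas in Theorem \ref{UBFormulas}, using that the sums there have coefficients of known sign and non-negative multiplicities $n_{\delta+i}$.

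First I would dispose of parts (1) and (2) via Theorem \ref{UBFormulas}(1), which writes
\[
M_1^\alpha(G) = n\delta^\alpha + (2m - n\delta)s_\alpha + \sum_{i=1}^{\Delta-\delta-1} n_{\delta+i}\bigl[(\delta+i)^\alpha - \delta^\alpha - i s_\alpha\bigr].
\]
That theorem already pins down the sign of each bracket: non-positive when $\alpha<0$ or $\alpha>1$, non-negative when $0<\alpha<1$. Since each $n_{\delta+i}\ge 0$, the whole correction sum has a definite sign, and simply dropping it yields the inequalities in parts (1) and (2). Equality forces every $n_{\delta+i}$ with $1\le i\le \Delta-\delta-1$ to vanish, which is precisely the bi-degreed condition (degree set $\{\delta,\Delta\}$).

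For parts (3) and (4), I would start from Theorem \ref{UBFormulas}(2):
\[
M_1^\alpha(G) = n\delta^\alpha + (2m-n\delta)\bigl((\delta+1)^\alpha - \delta^\alpha\bigr) + \sum_{i=2}^{\Delta-\delta} n_{\delta+i}\bigl[(\delta+i)^\alpha - \delta^\alpha - i\bigl((\delta+1)^\alpha - \delta^\alpha\bigr)\bigr].
\]
Again the brackets have known sign (opposite to the previous formula in each regime), so dropping the whole sum gives the first inequality in each part; when $\Delta=2$ the sum is empty and equality is automatic. To sharpen the bound for $\Delta\ge 3$, I would retain just the $i=\Delta-\delta$ term, using that $n_\Delta\ge 1$ since the maximum degree must be attained somewhere. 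That single term contributes at least $\Delta^\alpha - \delta^\alpha - (\Delta-\delta)\bigl((\delta+1)^\alpha - \delta^\alpha\bigr)$ (with the appropriate sign in each regime of $\alpha$), yielding the sharper inequality. Equality then forces $n_\Delta = 1$ together with $n_{\delta+i}=0$ for $2\le i\le \Delta-\delta-1$, so the extremal graph has a unique vertex of degree $\Delta$ and every other vertex of degree $\delta$ or $\delta+1$, as stated.

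The main obstacle is really just careful sign bookkeeping across the four cases (upper vs.\ lower bound, and the two regimes for $\alpha$), together with precision in identifying the equality cases. Because Lemma \ref{Lemma} and Theorem \ref{UBFormulas} have already done all of the analytic work, the corollary reduces to the observation that non-negative multiplicities times sign-controlled coefficients retain that sign, and that the single guarantee $n_\Delta\ge 1$ is enough to sharpen the estimate whenever $\Delta\ge 3$.
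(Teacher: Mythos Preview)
Your proposal is correct and matches the paper's own approach exactly: the paper's proof is simply ``The results follow from Theorem \ref{UBFormulas},'' and you have spelled out precisely how---dropping the sign-controlled correction sum in Theorem \ref{UBFormulas}(1) for parts (1)--(2), and dropping (or retaining only the $i=\Delta-\delta$ term of) the sum in Theorem \ref{UBFormulas}(2) for parts (3)--(4), using $n_\Delta\ge 1$.
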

\begin{proof}
The results follow from Theorem \ref{UBFormulas}.
\end{proof}

\begin{example}
Here is an example. In Figure \ref{fig:1}, $\Delta=4, m=n=12$ and all vertices have either degree $1$ or $4$. It attains the bounds given in parts (1) and (2) of the Corollary \ref{Bounds_Cor}. For Figure \ref{fig:2}, $\Delta=4$, $n=m=6$. It  attains the bounds given in parts (3) and (4) of the Corollary \ref{Bounds_Cor}.
\begin{figure}[h]
\centering
\begin{minipage}{.4\textwidth}
  \centering
  \begin{tikzpicture}[scale=0.3]
\draw[ultra thick] (-3,3) -- (3,3) -- (3,-3)--(-3,-3) -- (-3,3);
\draw[ultra thick] (3,3)--(6,4);
\draw[ultra thick] (3,3)--(4,6);
\draw[ultra thick] (-3,3)--(-4,6);
\draw[ultra thick] (-3,3)--(-6,4);
\draw[ultra thick] (3,-3)--(6,-4);
\draw[ultra thick] (3,-3)--(4,-6);
\draw[ultra thick] (-3,-3)--(-6,-4);
\draw[ultra thick] (-3,-3)--(-4,-6);
\filldraw[black] (3,3) circle (10pt);
\filldraw[black] (-3,3) circle (10pt);
\filldraw[black] (3,-3) circle (10pt);
\filldraw[black] (-3,-3) circle (10pt);
\filldraw[black] (6,4) circle (10pt);
\filldraw[black] (4,6) circle (10pt);
\filldraw[black] (-6,4) circle (10pt);
\filldraw[black] (-4,6) circle (10pt);
\filldraw[black] (6,-4) circle (10pt);
\filldraw[black] (4,-6) circle (10pt);
\filldraw[black] (-6,-4) circle (10pt);
\filldraw[black] (-4,-6) circle (10pt);
\end{tikzpicture}
\caption{}
  \label{fig:1}
\end{minipage}%
\begin{minipage}{.4\textwidth}
  \centering
  \begin{tikzpicture}[scale=0.3]
\draw[ultra thick] (-3,3) -- (3,3) -- (3,-3)--(-3,-3) -- (-3,3);
\draw[ultra thick] (3,3)--(6,4);
\draw[ultra thick] (3,3)--(4,6);
\filldraw[black] (3,3) circle (10pt);
\filldraw[black] (-3,3) circle (10pt);
\filldraw[black] (3,-3) circle (10pt);
\filldraw[black] (-3,-3) circle (10pt);
\filldraw[black] (6,4) circle (10pt);
\filldraw[black] (4,6) circle (10pt);
\end{tikzpicture}
\caption{}
  \label{fig:2}
\end{minipage}
\end{figure}
\end{example}

In Theorem 2.2, Hu-Li-Shi-Xu-Gutman \cite{HLSXG} established formulas for the maximum and the minimum zeroth-order general Randic index (or, the  
  general first Zagreb index) for $(n, m)$ molecular graphs if $2m - n$ is congruent to 0, 1, or 2 modulo 3 and the maximum vertex degree $\Delta =  4 $. In the following Theorem, we will extend this and find sharp bounds for the same index if $2m - n\delta$ is congruent to any integer $r$ modulo $\Delta - \delta$ where $0 \le r < \Delta - \delta$ and the maximum vertex degree $\Delta \geq 3 $.

\begin{theorem}\label{UBFormulas_2}
Let $G$ be a graph with $n$ vertices and $m$ edges where $n\geq 3$. Let $n_i$ denote the number of vertices of degree $i$. Let $\delta$ and  $\Delta$ respectively denote the minimum and the maximum degree.  Assume that $\Delta\geq 3$, $\Delta - \delta > 1$ and $2m - n\delta = q(\Delta - \delta) + r$, where q is a positive integer and r is an integer such that $0 \leq r \leq \Delta - \delta - 1$. Let $\alpha\in\mathbb{R}$ and $\alpha\not\in\{0,1\}$.  Further denote $s_\alpha=\frac{\Delta^\alpha- \delta^\alpha}{\Delta- \delta}$. Then we have the following.    

 \begin{enumerate}
\item If $r = 0$ and $n_{\Delta} = q$, then the graph G is a bi-degreed graph with vertices of degree $\delta$ and $\Delta.$

        \item If $r \geq 1$ and $n_{\Delta} = q$, then $n_i = 0$ for $\delta + r + 1 \leq i \leq \Delta - 1$ and $n_{\delta + r} \leq 1$.

\item If $r \geq 1$ and $\alpha<0$ or $\alpha>1$ and $n_{\delta + r} \not = 0$ , then the upper bound for the general first Zagreb index is $M_1^\alpha (G)\leq n\delta^\alpha +  (2m - n\delta)s_\alpha + (\delta + r)^{\alpha} - \delta^\alpha - r s_{\alpha}$. 
    
   \item  If $r \geq 1$ and  $0<\alpha<1$ and $n_{\delta + r} \not = 0$, then it becomes the lower bound for the general first Zagreb index, that is, $M_1^\alpha (G)\geq n\delta^\alpha +  (2m - n\delta)s_\alpha + (\delta + r)^{\alpha} - \delta^\alpha - r s_{\alpha}$. 

 \end{enumerate}
        
Additionally, in both parts (3) and (4), the equality occurs if the graph G has $q$ vertices of degree $\Delta$, a unique vertex of degree $\delta + r $, and $n - q - 1$ vertices of degree $\delta$.
    
\end{theorem}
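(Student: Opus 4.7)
The approach is to reduce everything to a single counting identity on vertex degrees and then feed it into the formula from Theorem \ref{UBFormulas}(1).

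First, from $n = \sum_{i=\delta}^\Delta n_i$ and $\sum_{i=\delta}^\Delta i\, n_i = 2m$ (as in the proof of Theorem \ref{UBFormulas}) I would derive
\[
(\Delta - \delta) n_\Delta \;=\; 2m - n\delta - \sum_{j=1}^{\Delta - \delta - 1} j\, n_{\delta + j},
\]
which, combined with the hypotheses $n_\Delta = q$ and $2m - n\delta = q(\Delta - \delta) + r$, collapses to the key identity
\begin{equation}\label{keyid}
\sum_{j=1}^{\Delta - \delta - 1} j\, n_{\delta + j} \;=\; r.
\end{equation}
Parts (1) and (2) then fall out of \eqref{keyid} together with the nonnegativity of the $n_{\delta + j}$. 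If $r = 0$, every term on the left of \eqref{keyid} must vanish, leaving only vertices of degree $\delta$ or $\Delta$, which is (1). If $r \geq 1$, any $n_{\delta + j} > 0$ with $j \geq r + 1$ would push the left of \eqref{keyid} to at least $r+1 > r$, and $n_{\delta + r} \geq 2$ would push it to at least $2r > r$, so both assertions in (2) follow.

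For parts (3) and (4) I would invoke Theorem \ref{UBFormulas}(1) in the form
\[
M_1^\alpha(G) \;=\; n\delta^\alpha + (2m - n\delta)\, s_\alpha + \sum_{i=1}^{\Delta - \delta - 1} n_{\delta + i}\, c_i,
\qquad c_i := (\delta + i)^\alpha - \delta^\alpha - i\, s_\alpha.
\]
Lemma \ref{Lemma}(1) pins down the sign of each $c_i$: we have $c_i \leq 0$ when $\alpha < 0$ or $\alpha > 1$, and $c_i \geq 0$ when $0 < \alpha < 1$. Isolating the $i = r$ summand,
\[
\sum_{i=1}^{\Delta - \delta - 1} n_{\delta + i}\, c_i \;=\; n_{\delta + r}\, c_r + \sum_{i \ne r} n_{\delta + i}\, c_i.
\]
In the $\alpha < 0$ or $\alpha > 1$ regime the second sum is $\leq 0$, and $n_{\delta + r}\, c_r \leq c_r$ because $n_{\delta + r} \geq 1$ by hypothesis and $c_r \leq 0$; adding back the two leading terms yields (3). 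The case $0 < \alpha < 1$ reverses every inequality, giving (4).

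For the equality clause I would verify that the configuration with $q$ vertices of degree $\Delta$, a unique vertex of degree $\delta + r$, and $n - q - 1$ vertices of degree $\delta$ satisfies \eqref{keyid} (with only the $j = r$ term surviving, contributing exactly $r$), so the sum in the displayed formula above collapses to precisely $c_r$, saturating the stated bound. The main obstacle is essentially bookkeeping: one has to track \eqref{keyid} and the sign pattern supplied by Lemma \ref{Lemma} simultaneously, and keep straight that the upper bound for $\alpha > 1$ or $\alpha < 0$ becomes a lower bound for $0 < \alpha < 1$; beyond that, nothing is technically deep.
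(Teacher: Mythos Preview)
Your proposal is correct and follows essentially the same route as the paper: derive the identity $\sum_{j=1}^{\Delta-\delta-1} j\,n_{\delta+j}=r$ from the two degree-counting equations together with $n_\Delta=q$ to dispatch (1) and (2), then plug Theorem~\ref{UBFormulas}(1) and the sign information from Lemma~\ref{Lemma}(1) into the remaining sum, drop all terms but the $i=r$ one, and use $n_{\delta+r}\geq 1$ with $c_r\le 0$ (respectively $c_r\ge 0$) to obtain (3) and (4). The paper's argument is identical in structure, only slightly terser in justifying the individual inequalities and the equality case.
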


\begin{proof}
We have  $n=\sum_{i=\delta}^\Delta n_i$ and $\sum_{i=\delta}^\Delta in_i=2m$. By solving these equations we get $\sum_{i=\delta + 1}^\Delta (i - \delta)n_i=2m - n\delta = q(\Delta - \delta) + r$. If $r = 0$ and $n_{\Delta} = q$, then $n_i = 0$ for $ \delta + 1 \leq i \leq \Delta - 1$. So we get (1). Also if $r \geq 1$ and $n_{\Delta} = q$,then $n_i = 0$ for $\delta + r + 1 \leq i \leq \Delta - 1$ and $n_{\delta + r} \leq 1$. So we get (2). To prove (3), let $r\geq 1$ and $n_{\delta + r} \not =  0$. By part (1) of Theorem \ref{UBFormulas} we have the general first Zagreb index
\[M_1^{\alpha}(G)=n\delta^\alpha +  (2m - n\delta)s_\alpha +\sum_{i =1}^{\Delta - \delta - 1}n_{\delta + i} \left[(\delta + i)^\alpha - \delta^\alpha  - is_\alpha\right].\]

If $\alpha<0$ or $\alpha>1$, then the coefficients  $(\delta + i)^\alpha - \delta^\alpha - is_\alpha \leq 0$ for $1 \leq i \leq \Delta -\delta - 1$ . So 

\[M_1^{\alpha}(G)\leq n\delta^\alpha +  (2m - n\delta)s_\alpha + n_{\delta + r} \left[(\delta + r)^\alpha - \delta^\alpha  - rs_\alpha\right].\]

Since $n_{\delta + r } \geq 1$, we have
\[M_1^{\alpha}(G)\leq n\delta^\alpha +  (2m - n\delta)s_\alpha + \left[(\delta + r)^\alpha - \delta^\alpha  - rs_\alpha\right].\]

So the result in part (3) follows. Similarly, the result in part (4) follows since for $0<\alpha<1$, the coefficients $(\delta + i)^\alpha - \delta^\alpha - is_\alpha \geq 0$ for $1 \leq i \leq \Delta -\delta - 1$.
\end{proof}

\begin{example}
We note that for Figure \ref{fig:1}, we have  $\delta=1, \Delta=4, m=n=12$, $2m - n = 4(\Delta - 1), n_\Delta = 4$ and the the graph G is a bi-degreed graph with vertices of degree $1$ and $\Delta.$  

Let $ r$ be any positive integer. Consider a cycle with p vertices $V_1, V_2, \dots , V_p$ and attach $r - 1$ pendent vertices to the vertex $V_1$ and attach $r$ pendent vertices to each vertex $V_i$ for $2 \leq i \leq p$. Then we will have $\Delta=r + 2, m = n= p(r + 1) - 1$. So $2m - n = p(r + 1) - 1 = (p - 1)(r + 1) + r$ and  the graph has $q= p - 1$ vertices of degree $\Delta$, a unique vertex of degree $r + 1$, and $n - q - 1$ vertices of degree 1. It attains the bounds given in parts (3) and (4) of the Theorem \ref{UBFormulas_2}.

For the following Figure \ref{fig:3},  $\Delta=6, m=n=19$. So $2m - n = 19 = 3(\Delta - 1) + 4$ and the graph has $q=3$ vertices of degree $\Delta$, a unique vertex of degree $r + 1 = 5$, and $n - q - 1 = 15$ vertices of degree 1. It attains the bounds given in parts (3) and (4) of the Theorem \ref{UBFormulas_2}.
\begin{figure}[h]
\centering
\begin{tikzpicture}[scale=0.3]

\draw[ultra thick] (-3,3) -- (3,3) -- (3,-3)--(-3,-3) -- (-3,3);

\draw[ultra thick] (3,3)--(6,5);

\draw[ultra thick] (3,3)--(5,6);

\draw[ultra thick] (3,3)--(6.4,3.5);

\draw[ultra thick] (3,3)--(3.5,6.4);

\draw[ultra thick] (-3,3)--(-4,6);

\draw[ultra thick] (-3,3)--(-5.3,5.3);

\draw[ultra thick] (-3,3)--(-6,4);

\draw[ultra thick] (3,-3)--(6,-5);

\draw[ultra thick] (3,-3)--(5,-6);

\draw[ultra thick] (3,-3)--(6.4,-3.5);

\draw[ultra thick] (3,-3)--(3.5,-6.4);

\draw[ultra thick] (-3,-3)--(-6,-5);

\draw[ultra thick] (-3,-3)--(-5,-6);

\draw[ultra thick] (-3,-3)--(-6.4,-3.5);

\draw[ultra thick] (-3,-3)--(-3.5,-6.4);

\filldraw[black] (6.4,3.5) circle (10pt);

\filldraw[black] (6.4,-3.5) circle (10pt);

\filldraw[black] (3.5,-6.4) circle (10pt);

\filldraw[black] (3.5,6.4) circle (10pt);

\filldraw[black] (3,3) circle (10pt);

\filldraw[black] (-3,3) circle (10pt);

\filldraw[black] (3,-3) circle (10pt);

\filldraw[black] (-3,-3) circle (10pt);

\filldraw[black] (6,5) circle (10pt);

\filldraw[black] (5,6) circle (10pt);

\filldraw[black] (-6,4) circle (10pt);

\filldraw[black] (-4,6) circle (10pt);

\filldraw[black] (6,-5) circle (10pt);

\filldraw[black] (5,-6) circle (10pt);

\filldraw[black] (-6,-5) circle (10pt);

\filldraw[black] (-5,-6) circle (10pt);

\filldraw[black] (-6.4,-3.5) circle (10pt);

\filldraw[black] (-3.5,-6.4) circle (10pt);

\filldraw[black] (-5.3,5.3) circle (10pt);

\end{tikzpicture}
    \caption{}
    \label{fig:3}
\end{figure}

\end{example}

\section{Formulas and bounds for Leap Zagreb indices}

In this section we will establish some formulas for the general first leap Zagreb index for triangle and quadrangle free graphs.  We will then find sharp upper and lower bounds for the general first leap Zagreb index using these formulas. 

\begin{theorem}\label{UBFormulas3}
    Let $G$ be a triangle and quadrangle free graph with $n$ vertices and $m$ edges where $n\geq 3$. Let $n_i$ denote the number of vertices of $2$-distance degree $i$. Let $d$ and $D$ be respectively the minimum and the  maximum $2$-distance degree. Assume that $d\neq 0$, $D\not = d$, and $D\geq 2$.  Let $\alpha\in\mathbb{R}$ and $\alpha\not\in\{0,1\}$.  Further denote $S_\alpha=\frac{D^\alpha - d^\alpha }{D - d}$. Then we have the following.
    \begin{enumerate}
        \item The general first leap Zagreb index
        \[LM_1^\alpha(G)= nd^\alpha +  (M_1 - 2m - nd)S_\alpha +\sum_{i =1}^{D - d - 1}n_{d + i} \left[(d + i)^\alpha - d^\alpha - i S_\alpha\right].\]

        \item The general first leap Zagreb index
       \begin{align*}
           LM_1^\alpha (G) = nd^\alpha +  (M_1 &- 2m - nd)((d + 1)^\alpha - d^\alpha) +\\&\sum_{i =2}^{D - d }n_{d + i} \left[(d + i)^\alpha - d^\alpha - i ((d + 1)^\alpha - d^\alpha)\right].
       \end{align*}

        \item The first leap Zagreb coindex
            \[LM_1^{-}(G) = (M_1 - 2m) (n - 1 - D - d) + ndD - \sum_{i=1}^{D - d - 1} n_{d + i} ( i (d + i - D)).\]  
            
        \item The first leap Zagreb coindex
            \[LM_1^{-}(G) = (M_1 - 2m) (n - 2d - 2) + nd(d + 1) - \sum_{i=2}^{D - d} n_{d + i} i( i - 1).\]  
            
    \end{enumerate}
\end{theorem}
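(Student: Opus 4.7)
The plan is to mirror the proof of Theorem \ref{UBFormulas}, with the degrees $d(u)$ replaced by the $2$-distance degrees $d_2(u)$, and with the identity $\sum_{u} d(u) = 2m$ replaced by the corresponding identity for $d_2$. The key preliminary step, which uses the triangle and quadrangle free hypothesis, is to show that
\[
\sum_{u \in V(G)} d_2(u) \;=\; M_1(G) - 2m.
\]
To prove this, I would fix a vertex $u$ and count the neighbors of its neighbors. If $v$ is adjacent to $u$, then every neighbor of $v$ other than $u$ itself lies at distance exactly $2$ from $u$: the distance is not $0$ since $v$'s other neighbor is not $u$, and the distance is not $1$ since that would create a triangle on $\{u, v, w\}$. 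Moreover, no vertex $w$ at distance $2$ from $u$ can be reached through two different neighbors of $u$, for that would create a $4$-cycle. Hence $d_2(u) = \sum_{v \sim u}(d(v) - 1)$, and summing over $u$ gives $\sum_u d_2(u) = \sum_v d(v)^2 - \sum_v d(v) = M_1(G) - 2m$.

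With this identity in hand, parts (1) and (2) follow exactly as in Theorem \ref{UBFormulas}. Using $n = \sum_{i=d}^{D} n_i$ and $\sum_{i=d}^{D} i\, n_i = M_1 - 2m$, I would solve for $n_D$ (respectively for $n_d$ and $n_{d+1}$) and substitute into $LM_1^\alpha(G) = \sum_{i=d}^{D} i^\alpha n_i$, just as in the degree case. The resulting expressions are purely algebraic manipulations and do not even require Lemma \ref{Lemma} for the formulas themselves (the lemma only controls the signs of the coefficients, which is not part of the statement of Theorem \ref{UBFormulas3}).

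For parts (3) and (4), the analogue of the Mansour--Song relation (\ref{eq}) is
\[
LM_1^-(G) \;=\; (n-1)\sum_{u} d_2(u) \;-\; LM_1(G),
\]
which I would prove by a direct double-counting argument: the sum of $d_2(u)+d_2(v)$ over all unordered pairs $u\ne v$ equals $(n-1)\sum_u d_2(u)$, while the sum over those pairs with $uv \in E_2$ equals $\sum_u d_2(u)^2 = LM_1(G)$, since $d_2(u)$ is exactly the degree of $u$ in $G^2$. Substituting $\sum_u d_2(u) = M_1 - 2m$ and using parts (1) and (2) with $\alpha = 2$ (where $S_2 = D+d$ and $(d+1)^2 - d^2 = 2d+1$, which makes the bracketed coefficients simplify to $i(d+i-D)$ and $i(i-1)$ respectively) then yields the compact expressions in (3) and (4) after collecting the $(M_1-2m)$ and $nd$ terms.

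The main obstacle is the preliminary step: establishing that $d_2(u) = \sum_{v\sim u}(d(v)-1)$ requires care to see that both the ``no triangle'' and the ``no $4$-cycle'' conditions are used, one to rule out overcounting along a single neighbor of $u$ and the other to rule out the same distance-$2$ vertex being counted along two different neighbors. Once this is in place, everything else is a routine adaptation of Theorem \ref{UBFormulas} and its proof, with the role of $2m$ played by $M_1 - 2m$.
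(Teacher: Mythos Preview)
Your proposal is correct and follows essentially the same route as the paper: reduce to the two identities $n=\sum_i n_i$ and $\sum_i i\,n_i = M_1-2m$, eliminate $n_D$ (respectively $n_d,n_{d+1}$) to obtain (1) and (2), and then combine with the relation $LM_1(G)+LM_1^-(G)=(n-1)(M_1-2m)$ at $\alpha=2$ to get (3) and (4). The only difference is that the paper cites both the identity $\sum_u d_2(u)=M_1-2m$ and the index--coindex relation from Ferdose--Shivashankara \cite{FK}, whereas you supply short direct proofs of each; your version is therefore more self-contained but otherwise identical in structure.
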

\begin{proof}
By Corollary 1.4 of Ferdose and Shivashankara \cite{FK}, we have $\sum_{i=d}^D in_i =  M_1 - 2m$. We also have $n=\sum_{i=d}^D n_i$. Solving these equations,   we will rewrite $n_D$ as follows.
\[
n_D=\frac{M_1 - 2m - nd - \sum_{i=d + 1}^{D  - 1}((i - d)n_i}{D - d}.
\]

Write $LM_1^\alpha (G)$ in terms of $n_i$, $d\leq i\leq D$, and substitute $n_d$ with $n-\sum_{i=d + 1}^D n_i$, we obtain
\[
LM_1^\alpha (G)=\sum_{i=d}^D  i^\alpha n_i = d^\alpha( n - \sum_{i=d + 1}^D n_i) + \sum_{i=d + 1}^D  i^\alpha n_i = nd^\alpha + \sum_{i = d + 1}^D  n_i(i^\alpha - d^\alpha).
\]
Further substitute $n_D$ into the sum, we get the result in part (1). Using the identities $n_d+n_{d + 1}=n - \sum_{i = d + 3}^D n_i$ and $dn_d + (d + 1)n_{d + 1} = M_1 - 2m - \sum_{i = d + 3}^D  in_i$, we solve for $n_d$ and $n_{d + 1}$ and substituting $n_d$ and $n_{d + 1}$ into $LM_1^\alpha(G)=\sum_{i=d}^D  i^\alpha n_i$, we get the result in part (2). To prove parts (3) and (4), we note that Ferdose and Shivashankara \cite{FK}  proved the following relation between the first leap Zagreb index and coindex for any $C_3, C_4$ free graph $G$,
\begin{equation}\label{eq}
     LM_1(G) + LM_1^{-}(G) =(n - 1)(M_1 - 2m).
\end{equation}
Then using parts (1) and (2), we get the results in parts (3) and (4).
\end{proof}

The following corollary gives upper bounds or lower bounds for the general first leap Zagreb index, depending on the values of $\alpha$. Both the upper bounds and lower bounds are sharp. 

\begin{cor}\label{cor:2}
    Let $G$ be a a triangle and quadrangle free graph with $n$ vertices and $m$ edges where $n\geq 3$. Let $n_i$ denote the number of vertices of 2-distance degree $i$. Let $d$ and $D$ be respectively the minimum and the  maximum 2-distance degree. Assume that $d\neq 0$, $D\not = d$, and $D\geq 2$.  Let $\alpha\in\mathbb{R}$ and $\alpha\not\in\{0,1\}$.  Further denote $S_\alpha=\frac{D^\alpha - d^\alpha }{D - d}$. Then we have the following. 

\begin{enumerate}
        \item If $\alpha<0$ or $\alpha>1$, the upper bound for the general first leap Zagreb index is $LM_1^\alpha (G)\leq  nd^\alpha +  (M_1 - 2m - nd)S_\alpha $. 
    
    \item If $0<\alpha<1$, it becomes the lower bound for the general first leap Zagreb index, that is, $LM_1^\alpha (G)\geq  nd^\alpha +  (M_1 - 2m - nd)S_\alpha$. 
\end{enumerate}
Moreover, for parts (1) and (2), the equality occurs for any bi-2-distance degreed graph with vertices of 2-distance degree $d$ and $D.$

\begin{enumerate}
\item[(3)] If $\alpha<0$ or $\alpha>1$, then the lower bound for the general first leap Zagreb index is $LM_1^\alpha(G)\geq  nd^\alpha +  (M_1 - 2m - nd)((d + 1)^\alpha - d^\alpha)$, and the equality occurs for any graph with $D = 2$. Additionally, when $D\geq 3$,
    \begin{align*}
    LM_1^\alpha(G)\geq nd^\alpha +  (2m - nD) (&(d + 1)^\alpha - d^\alpha) +
    \\&D^\alpha - d^\alpha - (D -d)((d + 1)^\alpha - d^\alpha) 
\end{align*}
    and the equality occurs if the graph G has a unique vertex of degree $D$ and $n - 1$ vertices of degree $d$ or $d+1$.
    
   \item[(4)] If $0<\alpha<1$, $nd^\alpha +  (M_1 - 2m - nd)((d + 1)^\alpha - d^\alpha)$ becomes the upper bound for the general first leap Zagreb index, that is, $LM_1^\alpha(G)\leq nd^\alpha +  (M_1 - 2m - nd)((d + 1)^\alpha - d^\alpha)$. The equality occurs for any graph with $D = 2$. Additionally, when $D\geq 3$, $LM_1^\alpha(G)\leq nd^\alpha +  (2m - nD) ((d + 1)^\alpha - d^\alpha) +
    D^\alpha - d^\alpha - (D -d)((d + 1)^\alpha - d^\alpha)$, where the equality occurs if the graph G has a unique vertex of 2-distance degree $D$ and $n - 1$ vertices of 2-distance degree $d$ or $d+1$. 
\end{enumerate}
       
\end{cor}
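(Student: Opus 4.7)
The plan is to reduce everything to the two exact formulas in Theorem \ref{UBFormulas3} and to read off the signs of the coefficients from Lemma \ref{Lemma}, taking $p=d$ and $q=D$. The strategy is entirely parallel to Corollary \ref{Bounds_Cor}: where that corollary uses the degree sequence and the degree formulas, here we use the $2$-distance degree sequence and its leap-index formulas, so no new ideas beyond careful sign tracking are needed.

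For parts (1) and (2), I would start from Theorem \ref{UBFormulas3}(1),
\[LM_1^\alpha(G) = nd^\alpha + (M_1-2m-nd)S_\alpha + \sum_{i=1}^{D-d-1} n_{d+i}\bigl[(d+i)^\alpha - d^\alpha - iS_\alpha\bigr].\]
Lemma \ref{Lemma}(1) with $p=d$, $q=D$ shows that each bracketed coefficient is $\leq 0$ when $\alpha<0$ or $\alpha>1$ and $\geq 0$ when $0<\alpha<1$. Since every $n_{d+i}\geq 0$, dropping the sum gives the upper bound in (1) and the lower bound in (2). Equality forces $n_{d+i}=0$ for $1\leq i\leq D-d-1$, which is precisely the condition that $G$ be bi-$2$-distance degreed with $2$-distance degrees $d$ and $D$.

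For parts (3) and (4), I would instead begin with Theorem \ref{UBFormulas3}(2),
\[LM_1^\alpha(G) = nd^\alpha + (M_1-2m-nd)\bigl((d+1)^\alpha - d^\alpha\bigr) + \sum_{i=2}^{D-d} n_{d+i}\bigl[(d+i)^\alpha - d^\alpha - i((d+1)^\alpha - d^\alpha)\bigr],\]
and invoke Lemma \ref{Lemma}(2): the bracketed coefficients are $\geq 0$ when $\alpha<0$ or $\alpha>1$ and $\leq 0$ when $0<\alpha<1$. In the case $D=2$, the hypotheses $d\geq 1$ and $d<D$ force $d=1$, so the summation range $2\leq i\leq D-d=1$ is empty and the stated bound holds with equality. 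When $D\geq 3$, the index $i=D-d$ lies in the summation range, and by definition of $D$ we have $n_D\geq 1$. Retaining only this last summand (and discarding the other terms, whose signs agree with the inequality direction) contributes exactly $D^\alpha - d^\alpha - (D-d)((d+1)^\alpha - d^\alpha)$, which sharpens the base bound in the claimed direction. Equality requires $n_D=1$ together with $n_{d+i}=0$ for $2\leq i\leq D-d-1$, i.e.\ $G$ has exactly one vertex of $2$-distance degree $D$ and all other vertices have $2$-distance degree $d$ or $d+1$.

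The only real obstacle is bookkeeping: one must confirm for each of the four cases that the coefficient signs from Lemma \ref{Lemma} match the direction of the inequality being proved, and, in the $D\geq 3$ refinement, that retaining the $i=D-d$ term tightens rather than weakens the bound. Once these signs are verified, each part of the corollary is a one-line consequence of the corresponding part of Theorem \ref{UBFormulas3}, so no further estimation is required.
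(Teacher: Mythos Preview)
Your approach is essentially identical to the paper's: the paper's proof is the single line ``Follows from Theorem \ref{UBFormulas3} and Lemma \ref{Lemma},'' and you have correctly unpacked this by applying Lemma \ref{Lemma} with $p=d$, $q=D$ to the two formulas in Theorem \ref{UBFormulas3}, tracking coefficient signs, and extracting the equality conditions. Your argument is correct and more detailed than the paper's, but follows the same route; note only that the expression $(2m-nD)$ appearing in the $D\geq 3$ refinement of parts (3) and (4) of the stated corollary is evidently a typographical slip for $(M_1-2m-nd)$, which is what your derivation from Theorem \ref{UBFormulas3}(2) actually yields.
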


\begin{proof}
Follows from Theorem \ref{UBFormulas3} and Lemma \ref{Lemma}.
\end{proof}

\begin{example}
This is for the Corollary \ref{cor:2}. In the following Figure \ref{fig:4}, the graph is a bi-2-distance degree graph with vertices of 2-distance degree 1 and 4. It attains the bounds of parts (1) and (2) of the corollary. 
\begin{figure}[h]

\centering

  \begin{tikzpicture}[scale=0.4]

\draw[ultra thick] (-10,0)--(-5,0)--(0,0)--(5,0)--(10,0);

\draw[ultra thick] (0,0)--(-3,4);

\draw[ultra thick] (0,0)--(3,4)--(6,8);

\draw[ultra thick] (0,0)--(0,-5)--(0,-10);

\filldraw[black] (3,4) circle (10pt);

\filldraw[black] (6,8) circle (10pt);

\filldraw[black] (0,-5) circle (10pt);

\filldraw[black] (0,-10) circle (10pt);

\filldraw[black] (-10,0) circle (10pt);

\filldraw[black] (-5,0) circle (10pt);

\filldraw[black] (0,0) circle (10pt);

\filldraw[black] (5,0) circle (10pt);

\filldraw[black] (10,0) circle (10pt);

\filldraw[black] (-3,4) circle (10pt);

\end{tikzpicture}

\caption{}

  \label{fig:4}

\end{figure}

In the Figure \ref{fig:5}, the graph has a unique vertex of 2-distance degree 4 and all other vertices of 2-distance degree 1 or 2. It attains the bounds of parts (3) and (4) of the corollary.
\begin{figure}[h]

\centering

  \begin{tikzpicture}[scale=0.4]

\draw[ultra thick] (-12,0)--(-6,0)--(0,0)--(6,0)--(12,0);

\draw[ultra thick] (-6,0)--(-6,6);

\draw[ultra thick] (6,0)--(6,-6);

\filldraw[black] (-12,0) circle (10pt);

\filldraw[black] (-6,0) circle (10pt);

\filldraw[black] (0,0) circle (10pt);

\filldraw[black] (6,0) circle (10pt);

\filldraw[black] (12,0) circle (10pt);

\filldraw[black] (-6,6) circle (10pt);

\filldraw[black] (6,-6) circle (10pt);

\end{tikzpicture}

 \caption{}

  \label{fig:5}

\end{figure}
\end{example}

In the following Theorem, we will find sharp bounds for the general first leap Zagreb  index if $M_1 - 2m - nd = q(D - d) + r$, where $q$ is a positive integer and r is an integer such that $0 \leq r \leq D - d - 1$.  

\begin{theorem}\label{UBFormulas_3}
   Let $G$ be a triangle and quadrangle free graph with $n$ vertices and $m$ edges where $n\geq 3$. Let $n_i$ denote the number of vertices of 2-distance degree $i$. Let $d$ and $D$ be respectively the minimum and the  maximum 2-distance degree. Assume that $d\neq 0$, $D - d\geq 2$, and $M_1 - 2m - nd = q(D - d) + r$, where $q$ is a positive integer and r is an integer such that $0 \leq r \leq D - d - 1$.   Let $\alpha\in\mathbb{R}$ and $\alpha\not\in\{0,1\}$.  Further denote $S_\alpha=\frac{D^\alpha - d^\alpha }{D - d}$. Then we have the following.
    
 \begin{enumerate}
        \item If $r = 0$ and $n_{D} = q$, then the graph G is a bi-2-distance degree graph with vertices of 2-distance degree d and $D.$. 

  \item If $r \geq 1$ and $n_{D} = q$, then $n_i = 0$ for $ d + r + 1 \leq i \leq D  - 1$ and $n_{d + r} \leq 1$.

\item If $r \geq 1$ and $\alpha<0$ or $\alpha>1$ and $n_{d + r} \not = 0$ , then the upper bound for the general first leap Zagreb index is
 $LM_1^{\alpha}(G)\leq nd^\alpha +  (M_1 - 2m - nd)S_\alpha + ((d + r )^\alpha - d^\alpha - rS_\alpha) $.

   \item  If $r \geq 1$ and $0<\alpha<1$ and $n_{d + r} \not = 0$, it becomes the lower bound for the general first leap Zagreb index, that is, $LM_1^{\alpha}(G)\geq nd^\alpha +  (M_1 - 2m - nd)S_\alpha + ((d + r )^\alpha - d^\alpha - rS_\alpha) $.  

 \end{enumerate}
        
Additionally, in both parts (3) and (4) the equality occurs if the graph G has $q$ vertices of 2-distance degree $D$, a unique vertex of 2-distance degree $r + 1$, and $n - q - 1$ vertices of 2-distance degree $d$.
 
\end{theorem}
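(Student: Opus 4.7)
The plan is to mirror the argument for Theorem \ref{UBFormulas_2} exactly, substituting 2-distance degree data for ordinary degree data. The single new ingredient is Corollary~1.4 of Ferdose and Shivashankara, which plays the role of the handshake lemma and gives $\sum_{i=d}^D i\, n_i = M_1 - 2m$ for triangle- and quadrangle-free graphs; this identity was already invoked in the proof of Theorem \ref{UBFormulas3}.

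First I would combine $\sum_{i=d}^D n_i = n$ with $\sum_{i=d}^D i\, n_i = M_1 - 2m$ to obtain the key constraint
\[
\sum_{i=d+1}^D (i - d)\, n_i \;=\; M_1 - 2m - nd \;=\; q(D - d) + r.
\]
For part (1), setting $r = 0$ and $n_D = q$ reduces this to $\sum_{i=d+1}^{D-1}(i-d)\, n_i = 0$; since every summand is non-negative, each $n_i$ vanishes for $d < i < D$, so $G$ is bi-2-distance-degreed with values $d$ and $D$. For part (2), setting $r \geq 1$ with $n_D = q$ reduces the constraint to $\sum_{i=d+1}^{D-1}(i-d)\, n_i = r$. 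Because $r \leq D - d - 1$, any contribution from an index $i$ with $d + r + 1 \leq i \leq D - 1$ would have weight $i - d \geq r + 1 > r$, which is impossible; this forces $n_i = 0$ on that range. Isolating the $i = d+r$ term then gives $r\, n_{d+r} \leq r$, so $n_{d+r} \leq 1$.

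For parts (3) and (4) I would plug into part (1) of Theorem \ref{UBFormulas3}, namely
\[
LM_1^\alpha(G) \;=\; n d^\alpha + (M_1 - 2m - n d)\, S_\alpha + \sum_{i = 1}^{D - d - 1} n_{d+i}\bigl[(d+i)^\alpha - d^\alpha - i\, S_\alpha\bigr],
\]
and exploit Lemma \ref{Lemma}, which says that the bracketed coefficients are all $\leq 0$ when $\alpha < 0$ or $\alpha > 1$, and all $\geq 0$ when $0 < \alpha < 1$. In case (3), every summand in the residual sum is non-positive, so dropping every index $i \neq r$ gives
\[
\sum_{i=1}^{D-d-1} n_{d+i}\bigl[(d+i)^\alpha - d^\alpha - i\, S_\alpha\bigr] \;\leq\; n_{d+r}\bigl[(d+r)^\alpha - d^\alpha - r\, S_\alpha\bigr] \;\leq\; (d+r)^\alpha - d^\alpha - r\, S_\alpha,
\]
where the second inequality uses $n_{d+r} \geq 1$ together with the non-positivity of the bracket. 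This yields the stated upper bound; case (4) is entirely symmetric with signs reversed.

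The equality statement is handled by exhibiting the extremal configuration $n_D = q$, $n_{d+r} = 1$, $n_d = n - q - 1$, and all other $n_i = 0$; one verifies directly that this satisfies both $\sum n_i = n$ and $\sum (i-d)\, n_i = q(D-d) + r$, and collapses the residual sum to exactly $(d+r)^\alpha - d^\alpha - r\, S_\alpha$. I do not expect a serious obstacle: parts (1) and (2) are purely combinatorial bookkeeping on the constraint equation, and parts (3)--(4) reduce to a one-line sign analysis of a single summand, identical to the argument in Theorem \ref{UBFormulas_2}. The only delicate point is that realizability of the extremal graphs (as actual triangle- and quadrangle-free graphs with the prescribed 2-distance degree sequence) is a stronger requirement than the degree-sequence identity used in the bound; I would treat this as a degree-sequence statement following the conventions of the earlier theorems rather than prove an existence result.
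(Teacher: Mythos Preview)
Your proposal is correct and follows essentially the same route as the paper: both arguments derive the constraint $\sum_{i=d+1}^{D}(i-d)n_i = q(D-d)+r$ from Corollary~1.4 of Ferdose--Shivashankara, deduce parts (1)--(2) by elementary bookkeeping on this equation, and obtain parts (3)--(4) by inserting the formula from Theorem~\ref{UBFormulas3}(1), discarding all but the $i=r$ summand via the sign information from Lemma~\ref{Lemma}, and invoking $n_{d+r}\geq 1$. Your write-up is in fact slightly more explicit than the paper's in justifying why the indices $d+r+1\leq i\leq D-1$ must vanish in part (2) and in verifying the equality configuration.
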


\begin{proof}
We have  $n=\sum_{i=d}^D  n_i$ and $\sum_{i = d}^D  in_i= M_1-  2m$. By solving the equations, we get $\sum_{i=d + 1}^D  (i - d)n_i= M_1 - 2m - nd = q(D - d) + r$. If $r = 0$ and $n_{D} = q$, then $n_i = 0$ for $ d + 1 \leq i \leq D - 1$. So we get the result in (1). Also if $r \geq 1$ and $n_{D} = q$,then $n_i = 0$ for $ d + r + 1 \leq i \leq D - 1$ and $n_{d + r} \leq 1$. So we get the result in (2). To prove (3), let $r\geq 1$ and $n_{d + r} \not =  0$. By part (1) of Theorem \ref{UBFormulas3} we have the general first leap Zagreb index 
$LM_1^\alpha(G)= nd^\alpha +  (M_1 - 2m - nd)S_\alpha +\sum_{i =1}^{D - d - 1}n_{d + i} \left[(d + i)^\alpha - d^\alpha - i S_\alpha\right]$. By part (1) of lemma (3.1), it follows that  if $\alpha<0$ or $\alpha>1$, then $(d + i)^\alpha - d^\alpha - i S_\alpha  \leq 0$ for $1 \leq i \leq D - d - 1$. So $LM_1^\alpha(G) \leq nd^\alpha +  (M_1 - 2m - nd)S_\alpha +  n_{d + r} \left[(d + r)^\alpha - d^\alpha - rS_\alpha\right]$. Since $n_{r + d} \not =  0$, the result in part (3) follows. Similarly, the result in part (4) follows since for $0<\alpha<1$, $(d + i)^\alpha - d^\alpha - i S_\alpha  \geq 0$ for $1 \leq i \leq D - d - 1$.

\end{proof}

\begin{example}
Let $a$ be any positive integer greater than 1. Consider the star graph with p vertices $V_1, V_2, \dots, V_p$, where $p > 4$, $p - 2 > a$ and the vertex $V_1$ is the center. Attach one pendent vertex to each vertex $V_i$ for $2 \leq i \leq a$. Then we will have number of vertices $ n = p + a$, the number of edges  $m = p + a - 1$, the 2-distance degree of the center vertex $V_1$ is $a$, the 2-distance degree of each vertex $V_i = p -2$ for $2 \leq i \leq p$, and the 2-distance degree of each pendent vertex is 1. So the maximum 2-distance degree $D = p - 2$ and the minimum 2-distance degree $d = 1$. So, $M_1 - 2m - nd = p^2 - 4p + a + 2 = (p -1)(p - 3) + a - 1$, and  the graph has $q= p - 1$ vertices of 2-distance degree $D$, a unique vertex of 2-distance degree $a$, and $n - q - 1$ vertices of 2-distance degree 1. It attains the bounds given in parts (3) and (4) of the Theorem \ref{UBFormulas_3}. 

For the following Figure \ref{fig:6},  $D = 4, d = 1, p = 6, a = 3, n=9$. So $M_1 - 2m - nd = 17 = 5(D - 1) + 2$ and the graph has $q=5$ vertices of 2-distance degree $D$, a unique vertex of 2-distance degree $3$, and $n - q - 1 = 3$ vertices of 2-distance degree 1. It attains the bounds given in parts (3) and (4) of the Theorem \ref{UBFormulas_3}.

\begin{figure}[h]

\centering

  \begin{tikzpicture}[scale=0.4]

\draw[ultra thick] (-5,0)--(0,0)--(5,0)--(10,0);

\draw[ultra thick] (0,0)--(-3,4);

\draw[ultra thick] (0,0)--(3,4)--(6,8);

\draw[ultra thick] (0,0)--(0,-5)--(0,-10);

\filldraw[black] (3,4) circle (10pt);

\filldraw[black] (6,8) circle (10pt);

\filldraw[black] (0,-5) circle (10pt);

\filldraw[black] (0,-10) circle (10pt);

\filldraw[black] (-5,0) circle (10pt);

\filldraw[black] (0,0) circle (10pt);

\filldraw[black] (5,0) circle (10pt);

\filldraw[black] (10,0) circle (10pt);

\filldraw[black] (-3,4) circle (10pt);

\end{tikzpicture}

\caption{}

  \label{fig:6}

\end{figure}

\end{example}

\begin{remark}
    If $d = 0$, we have the following formula.

The general first Leap Zagreb index
        \[LM_1^\alpha(G)= (n - n_0) +  (M_1 - 2m - n + n_0) (\frac{D^\alpha - 1 }{D - 1}) +\sum_{i =2}^{D - - 1}n_{i} \left[(i)^\alpha - 1 - (i - 1)(\frac{D^\alpha - 1 }{D - 1})\right].\]
As said in the proof of Theorem (4.1) we have $\sum_{i=d}^D in_i =  M_1 - 2m$ and $n=\sum_{i=d}^D n_i$. Solving these equations for $n_1$ and $n_D$, we get the formula. In addition, solving the equations for $n_1$ and $n_0$, we get the following formula.The general first Leap Zagreb index
$ LM_1^\alpha (G) = M_1 - 2m +\sum_{i =2}^{D}n_i (i^\alpha - i)$.
\end{remark}

\section{Regression models for Chemical Compounds}
In this Section, we will find regression models for properties like boiling point and entropy of Benzenoid hydrocarbons using the first Zagreb index and the first leap Zagreb index.

In 2022, P.Sarkar, Nilanjan De, and Anita Pal \cite{SPNA} investigated predictive ability of physicochemical prperties of plycyclic aromatic hydrocarbons for cerain degree based topological indices including Zagreb indices. They found the following:
The correlation coefficient of the first Zagreb index $M_1$ and the entropy S of the data of 22 benzenoid hydrocarbons given in Table 2 of the paper = 0.923654 and the regression equation is 
$S = 62.277(+ \text{ or} -4.626) + 0.416(+ \text{ or} - 0.038)M_1$.

We found the following: 
The correlation coefficient of the first leap zagreb index $LM_1$ and the entropy S of the data of 22 benzenoid hydrocarbons given in Table 2 of the paper = 0.867332963 and the regression equation is 
$S = 76.29013499  + 0.127338078LM_1$.

In addition, In 2023, H.S. Ramane and K.S. Pise \cite{PKH} showed chemical importance of the leap Zagreb indices. They found the following:
The correlation coefficient of the first leap Zagreb index $LM_1$ and the boiling points of the data of 21 benzenoid hydrocarbons given in Fig.1 of the paper = 0.9656 and the regression equation is 
$BP= 171.6991  + 1.1451LM_1$.

We found the following:
The correlation coefficient of the first zagreb index $M_1$ and the boiling points of the data of 21 benzenoid hydrocarbons given in Fig.1 of the paper = 0.992773893 and the regression equation is 
$BP= 58.08410846 +  3.628596366M_1$.

 Overall, Table \ref{tab:1} is a summary  of the regression models. For both properties, the first Zagreb index correlates better compared to the first leap Zagreb index.

\begin{table}[h]
    \centering
    \resizebox{\columnwidth}{!}{
    \begin{tabular}{|c|c|c|c|c|}\hline
    & $\rho$ ($M_1$) & Regression ($M_1$) & $\rho$ ($LM_1$) & Regression ($LM_1$)\\\hline
  Correlation with $S$      & $0.923654$  & $S = 62.277 + 0.416M_1$ & $0.867332963$ &$S = 76.290  + 0.127LM_1$\\\hline
  Correlation with $BP$       & $0.992773893$ & $BP= 58.084 +  3.629M_1$& $0.9656$ & $BP= 171.699  + 1.145LM_1$\\\hline
    \end{tabular}}
    \caption{}
    \label{tab:1}
\end{table}

\section{Discussion and Conclusion}
In the last forty years, many scientists have developed mathematical models for analyzing structures and properties of various chemical compounds. Graph theory has provided many powerful tools to develop many types of models such as Quantitative Structure-Property Relationships (QSPR) models and Quantitative Structure-Activity Relationships (QSAR) models. The pioneering work of Harry Weiner inspired many mathematicians and scientists to define more topological indices and study their applications to various fields such as medicine and environment.

The main research question is how to compute bounds and formulas of the various topological indices and find graphs which attain the bounds. In this paper, we established formulas of the general first Zagreb index, coindex, and leap Zagreb indices of graphs. Additionally, we found upper and lower bounds for the general first Zagreb index and leap Zagreb index and determine graphs which attain the bounds. In addition, we established sharp upper and lower bounds for the said indices for some special graphs.

Since there are more than 100 topological indices, there are many open problems of finding bounds and formulas for them. They play a vital role in modeling various properties such as physico-chemical, and biological properties of chemical compounds. This is silmply amazing!

\bibliographystyle{amsplain}

\begin{thebibliography}{10}

\bibitem{ADG} Abdo, H., Dimitrov, D., \& Gutman, I. (2017). On extremal trees with respect to the F-index. Kuwait Journal of Science, 44(3). 

\bibitem {BGG} Basak, S. C., Gute, B. D., \& Grunwald, G. D. (1996). Estimation of the normal boiling points of haloalkanes using molecular similarity. Croatica chemica acta, 69(3), 1159-1173.




\bibitem {DB}	Devillers, J., \& Balaban, A. T. (Eds.). (2000). Topological Indices and Related Descriptors in QSAR and QSPAR. CRC Press.

\bibitem{D} Do\v{s}li\'{c}, T. (2008). Vertex-weighted Wiener polynomials for composite graphs. Ars Mathematica Contemporanea, 1(1), 66-80.


\bibitem{FK}Ferdose, A. S. F. I. Y. A., \& Shivashankara, K. (2022). On leap zagreb coindices of graphs. Adv. Appl. Math. Sci, 21, 3755-3772.

\bibitem{FG} Furtula, B., \& Gutman, I. (2015). A forgotten topological index. Journal of mathematical chemistry, 53(4), 1184-1190.


\bibitem{GFS} Gao, W., Farahani, M. R., \& Shi, L. (2016). Forgotten topological index of some drug structures. Acta medica mediterranea, 32(1), 579-585.

\bibitem{GD} Gutman, I., \& Das, K. C. (2004). The first Zagreb index 30 years after. MATCH Commun. Math. Comput. Chem, 50(1), 83-92.


\bibitem {GFMG}	Gutman, I., Furtula, B., Marković, V., \& Glišić, B. (2007). Alkanes with greatest Estrada index. Zeitschrift f\"{u}r Naturforschung A, 62(9), 495-498.

\bibitem {GT} Gutman, I., \& Trinajstić, N. (1972). Graph theory and molecular orbitals. Total $\varphi$-electron energy of alternant hydrocarbons. Chemical physics letters, 17(4), 535-538.





\bibitem{HLSXG} Hu, Y., Li, X., Shi, Y., Xu, T., \& Gutman, I. (2005). On molecular graphs with smallest and greatest zeroth-order general Randic index. MATCH Commun. Math. Comput. Chem, 54(2), 425-434.


\bibitem{K}Kulli, V. R. Leap Indices Of Graphs, 2019. International Journal of Current Research in Life Sciences, 8(01), 2998-3006.

\bibitem{LZ} Li, X., \& Zhao, H. (2004). Trees with the first three smallest and largest generalized topological indices. \textit{MATCH Commun. Math. Comput. Chem}, 50, 57-62.

\bibitem{LZ2} Li, X., \& Zheng, J. (2005). A unified approach to the extremal trees for different indices. MATCH Commun. Math. Comput. Chem, 54(1), 195-208.


\bibitem{LL} Liu, M., \& Liu, B. (2010). Some properties of the first general Zagreb index. Australas. J Comb., 47, 285.

\bibitem{MS} Mansour, T., \& Song, C. (2012). The $a$ and $(a, b)$‐Analogs of Zagreb Indices and Coindices of Graphs. International Journal of Combinatorics, 2012(1), 909285.

\bibitem{NNI} Naji, A. M., Soner, N. D., \& Gutman, I. (2017). On leap Zagreb indices of graphs. Communications in combinatorics and optimization, 2(2), 99-117.


\bibitem{PKH} Pise, K., \& Ramane, H. (2023). New results on leap Zagreb indices. Annals of Mathematics and Computer Science, 15, 20-30.

\bibitem{SPNA} Sarkar, P., De, N., \& Pal, A. (2022). On some topological indices and their importance in chemical sciences: a comparative study. The European Physical Journal Plus, 137(2), 195.

\bibitem{SNT} Soleimani, N., Nikmehr, M., \& Tavallaee, H. (2015). Computation of the different topological indices of nanostructures. Journal of the national science foundation of Sri Lanka, 43(2).


\bibitem {TC}	Todeschini, R., \& Consonni, V. (2008). Handbook of molecular descriptors. John Wiley \& Sons.



\bibitem{VS} Vaidya, S., \& Surendran, G. (2016). Bounds for topological indices of molecular graphs. Congr. Numer. 227, 51-64.














 

\bibitem {B}	Zhou, B. (2008). On Estrada index. MATCH Commun. Math. Comput. Chem, 60(2), 485-492.





\end{thebibliography}

\end{document}